\newtheorem{theorem}{Theorem}[section]
\newtheorem{lemma}[theorem]{Lemma}
\newtheorem{corollary}[theorem]{Corollary}
\theoremstyle{definition}
\newtheorem{definition}[theorem]{Definition}
\newtheorem{remark}[theorem]{Remark}
\begin{document}
\setcounter{page}{1}

\title[Some inequalities for the $q$-Extension of the  Gamma Function ]{Some inequalities for the $q$-Extension of the  Gamma Function }

\author[K. Nantomah, E. Prempeh and S. B. Twum]{Kwara Nantomah$^{*}$$^1$,  Edward Prempeh$^2$ and  Stephen Boakye Twum$^3$}

\address{$^{1}$ Department of Mathematics, University for Development Studies, Navrongo Campus, P. O. Box 24, Navrongo, UE/R, Ghana. }
\email{\textcolor[rgb]{0.00,0.00,0.84}{mykwarasoft@yahoo.com, knantomah@uds.edu.gh}}

\address{$^{2}$ Department of Mathematics, Kwame Nkrumah University of Science and Technology, Kumasi, Ghana. }

\address{$^{3}$ Department of Mathematics, University for Development Studies, Navrongo Campus, P. O. Box 24, Navrongo, UE/R, Ghana. }


\subjclass[2010]{33B15, 33D05.}

\keywords{Gamma function, $q$-extension, geometrically convex function, inequality.}

\date{October 2015
\newline \indent $^{*}$ Corresponding author}

\begin{abstract}
In this paper, the authors establish some inequalities involving the $q$-extension of the classical Gamma function. These inequalities provide bounds for certain ratios  of the $q$-extended Gamma function. The procedure makes use of  geometric convexity and monotonicy properties of certain functions associated  with the $q$-extended Gamma function.
\end{abstract} \maketitle

\section{Introduction and Preliminaries}
\noindent
In recent years, the theory of inequalities has developed from a collection of isolated formulas into a vibrant independent area of research.  This is manifested by the emergence of several new journals devoted to this area of research. Particularly, inequalities involving special functions have been studied intensively by researchers across the globe. In this study, we establish some new inequalities  involving the $q$-extension of the Gamma function.  Before we present our results, let us recall  the following definitions pertaining to the results.\\

\noindent
The classical Euler's Gamma function, $\Gamma(x)$ is usually defined for $x>0$ by
\begin{equation*}\label{eqn:gamma}
\Gamma(x)=\int_0^\infty t^{x-1}e^{-t}\,dt = \lim_{n \rightarrow \infty}\left[ \frac{n!n^x}{x(x+1)\dots(x+n)} \right]. 
\end{equation*}

\noindent
It is well-known in literature that the Gamma function satisfies the following basic properties.
\begin{align}
\Gamma(x+1)&=x\Gamma(x) ,  \quad x\in R^{+} \label{eqn:functional-eqn-Gamma-reals}\\
\Gamma(n+1)&=n!, \quad n\in Z^{+}.  \label{eqn:functional-eqn-Gamma-integers} 
\end{align}

\noindent
Let $\psi(x)$ be the digamma or psi function  defined for $x>0$ as the logarithmic derivative of the Gamma function. That is,
\begin{equation*}\label{eqn:Psi}
\psi(x)=\frac{d}{dx}\ln \Gamma(x)=\frac{\Gamma'(x)}{\Gamma(x)}.
\end{equation*}

\noindent
The following series  representations hold true for $\psi(x)$, $x>0$ \cite{Abramowitz-Stegun-1965}.
\begin{equation}\label{eqn:digamma}
\psi(x)=-\gamma + (x-1) \sum_{n=0}^{\infty}\frac{1}{(1+n)(n+x)}=
-\gamma - \frac{1}{x} + \sum_{n=1}^{\infty}\frac{x}{n(n+x)}
\end{equation}
\noindent
where $\gamma$ is the Euler-Mascheroni's constant given by\\
\begin{equation}\label{eqn:Eulers-constant}
\gamma=\lim_{n \rightarrow \infty} \left( \sum_{k=1}^{n}\frac{1}{k}- \ln n \right)=-\psi(1)= 0.577215664... \, .
\end{equation}

\noindent
Let $\Gamma_q(x)$ be the $q$-extension (also known as, $q$-analogue, $q$-deformation or  $q$-generalization) of the Gamma function defined for $x>0$ and for fixed $q\in(0,1)$ by (see \cite{Askey-1978}, \cite{Kim-Rim-2000} and the references therein).

\begin{equation*}
\Gamma_q(x) = (1-q)^{1-x}\prod_{n=0}^{\infty}\frac{1-q^{n+1}}{1-q^{n+x}}=(1-q)^{1-x}\prod_{n=1}^{\infty}\frac{1-q^{n}}{1-q^{n+x}}.
\end{equation*}

\noindent
Similarly, $\Gamma_q(x)$ satisfies the following properties \cite{Chung-Kim-Mansour-2014}.
\begin{align}
\Gamma_q(x+1)&=[x]_q\Gamma_q(x),   \quad x\in R^{+}\label{eqn:functional-eqn-q-Gamma-reals}\\
\Gamma_q(n+1)&=[n]_q!,  \quad n\in Z^{+} \label{eqn:functional-eqn-q-Gamma-integers}
\end{align}
where $[x]_q=\frac{1-q^{x}}{1-q}$.  Note that equations ~(\ref{eqn:functional-eqn-q-Gamma-reals}) and ~(\ref{eqn:functional-eqn-q-Gamma-integers}) are respectively the $q$-extensions of equations ~(\ref{eqn:functional-eqn-Gamma-reals}) and ~(\ref{eqn:functional-eqn-Gamma-integers}).\\

\noindent
Likewise, the $q$-extension of the digamma function is  defined for $x>0$ and $q\in(0,1)$ as the logarithmic derivative of the function $\Gamma_q(x)$. That is,
\begin{equation*}\label{eqn:q-Psi}
\psi_q(x)=\frac{d}{dx}\ln \Gamma_q(x)=\frac{\Gamma'_q(x)}{\Gamma_q(x)}.
\end{equation*}
\noindent
It also exhibits the following series representations (see \cite{Ismail-Muldoon-2013},  \cite{Krasniqi-Mansour-Shabani-2010} and the related references therein).
\begin{align}
\psi_q(x)&= -\ln(1-q) + (\ln q) \sum_{n=0}^{\infty}\frac{q^{n+x}}{1-q^{n+x}} \nonumber \\
&=-\ln(1-q) + (\ln q)  \sum_{n=1}^{\infty}\frac{q^{nx}}{1-q^{n}} \label{eqn:q-digamma}
\end{align}

\noindent
The function $\psi_q(x)$ is increasing for $x>0$ \cite[Lemma 2.2]{Shabani-2008}. Also, for $q>0$ and $x>0$, $\psi_q(x)$ has a uniquely determined positive root \cite[Lemma 4.5]{Alzer-Grinshpan-2007}.\\

\noindent  
Further, let $\gamma_q$ be the $q$-extension of the Euler-Mascheroni's constant (see \cite{Elmonser-Brahim-Fitouhi-2012}, \cite{Ernst-2009}, \cite{Krattenthaler-Srivastava-1996}). Then,    
\begin{equation*}\label{eqn:q-Eulers-constant}
\gamma_q=\lim_{n \rightarrow \infty} \left( \sum_{k=1}^{n}\frac{1}{[k]_q}- \ln [n]_q \right)=-\psi_q(1).
\end{equation*}

\noindent
The following limit relations are valid (see  \cite{Ernst-2009},  \cite{Krattenthaler-Srivastava-1996}, \cite{Mansour-2008}). 
\begin{equation*}\label{eqn:q-Eulers-constant}
\lim_{q \rightarrow 1} \Gamma_q(x)=\Gamma(x), \quad \lim_{q \rightarrow 1} \psi_q(x)=\psi(x) \quad \text{and}\quad \lim_{q \rightarrow 1} \gamma_q=\gamma .
\end{equation*}

\begin{remark}
Unlike the value of $\gamma$ which is fixed, the value $\gamma_q$ varies according to the value of $q$. Tables of some approximate values of $\gamma_q$ can be found in \cite{Elmonser-Brahim-Fitouhi-2012} and \cite{Ernst-2009}.\\
\end{remark}

\noindent
By taking the $m$-th derivative of   ~(\ref{eqn:q-digamma}), it can easily be shown that
\begin{equation*}
\psi^{(m)}_q(x) = (\ln q)^{m+1}  \sum_{n=1}^{\infty}\frac{n^m q^{nx}}{1-q^{n}}
\end{equation*}
for $m\geq 1$. The functions $\psi^{(m)}_q(x)$ are  called the $q$-extension of the polygamma functions.

\begin{definition}\label{def:geometric-convexity}
(\cite{Krasniqi-Shabani-2010}, \cite{Nilculescu-2000}, \cite{Zhang-Xu-Situ-2007}). Let $f:I\subseteq(0,\infty) \rightarrow (0,\infty)$ be a continuous function. Then $f$ is called \textit{geometrically (or multiplicatively) convex}  on $I$ if there exists $n\ge2$ such that one of the following two inequalities holds:

\begin{equation}\label{eqn:ineq-geometric-convexity-1}
f(\sqrt{x_1x_2})\leq \sqrt{f(x_1)f(x_2)},
\end{equation}
\begin{equation}\label{eqn:ineq-geometric-convexity-2}
f\left( \prod_{i=1}^{n}x_{i}^{\lambda_i}\right) \leq \prod_{i=1}^{n}\left[ f(x_i) \right]^{\lambda_i}
\end{equation}
where $x_1, x_2, \dots, x_n \in I$ and $\lambda_1, \lambda_2, \dots, \lambda_n >0$ with $\sum_{i=1}^{n}\lambda_i = 1$. If inequalities ~(\ref{eqn:ineq-geometric-convexity-1}) and ~(\ref{eqn:ineq-geometric-convexity-2}) are reversed, then $f$ is called \textit{geometrically (or multiplicatively) concave} on $I$.
\end{definition}

\noindent
In 1971, Ke\v{c}ki\'{c}  and Vasi\'{c}  \cite[Theorem 1]{Keckic-Vasic-1971} established the double inequality  
\begin{equation}\label{eqn:ineq-Keckic-Vasic}
\frac{x^{x-1}e^{y}}{y^{y-1}e^{x}}\leq \frac{\Gamma(x)}{\Gamma(y)}\leq \frac{x^{x-\frac{1}{2}}e^{y}}{y^{y-\frac{1}{2}}e^{x}}
\end{equation}
for $x\geq y>1$,  by employing the monotonicity properties of certain functions involving the Gamma function.\\

\noindent
Also, in 2007,  Zhang, Xu and Situ \cite[Theorem 1.2]{Zhang-Xu-Situ-2007} established the double inequality
\begin{equation}\label{eqn:ineq-Zhang-Xu-Situ}
\frac{x^x}{y^y}\left( \frac{x}{y}\right)^{y[\psi(y)-\ln y]}e^{y-x}  \leq \frac{\Gamma(x)}{\Gamma(y)} \leq \frac{x^x}{y^y}\left( \frac{x}{y}\right)^{x[\psi(x)-\ln x]}e^{y-x} 
\end{equation}
for $x>0$ and $y>0$, by using the geometric convexity of a certain function related to the gamma function, and as a byproduct, inequality  ~(\ref{eqn:ineq-Keckic-Vasic}) was recovered.\\

\noindent
Furthermore, in 2010, Krasniqi and Shabani \cite[Theorem 3.5]{Krasniqi-Shabani-2010} also established the following related inequality for the $p$-Gamma function.
\begin{equation}\label{eqn:ineq-Krasniqi-Shabani}
\left( \frac{x}{y}\right)^{y[1+\psi_p(y)]}e^{y-x}  \leq \frac{\Gamma_p(x)}{\Gamma_p(y)} \leq \left( \frac{x}{y}\right)^{x[1+ \psi_p(x)]}e^{y-x}
\end{equation}
for $x>0$ and $y>0$.\\

\noindent
For more information  on inequalities of this nature,  one could access  the review article by Qi \cite{Qi-2010-Hindawi}.\\

\begin{lemma}\label{lem:geometric-convexity-lemma-1}
Let $f:I\subseteq(0,\infty) \rightarrow (0,\infty)$ be a differentiable function. Then $f$ is a geometrically convex function if and only if the function $\frac{xf'(x)}{f(x)}$ is nondecreasing.
\end{lemma}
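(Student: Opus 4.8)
The plan is to reduce geometric convexity to ordinary (additive) convexity by means of the logarithmic change of variables, and then to invoke the familiar first-order characterization of convexity for differentiable functions.

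First I would introduce an auxiliary function. Since $f$ maps into $(0,\infty)$ and $I\subseteq(0,\infty)$, both the substitution $x=e^{t}$ and the composition $\ln f$ are legitimate. Set $J=\{\ln x : x\in I\}$ and define
\[
F(t)=\ln f(e^{t}), \qquad t\in J.
\]
The key observation is that the defining inequality (\ref{eqn:ineq-geometric-convexity-2}) for $f$ is precisely the statement that $F$ is convex on $J$ in the usual sense. Indeed, writing $x_i=e^{t_i}$ and taking logarithms, inequality (\ref{eqn:ineq-geometric-convexity-2}) becomes
\[
F\!\left(\sum_{i=1}^{n}\lambda_i t_i\right)\le \sum_{i=1}^{n}\lambda_i F(t_i),
\]
which is Jensen's inequality for $F$. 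Hence $f$ is geometrically convex on $I$ if and only if $F$ is convex on $J$. Establishing this correspondence cleanly, matching the weighted geometric mean on the domain and on the range with a single additive Jensen inequality, is the conceptual heart of the argument; everything afterward is routine calculus.

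Next, since $f$ is differentiable and strictly positive, the chain rule shows that $F$ is differentiable on $J$ with
\[
F'(t)=\frac{d}{dt}\ln f(e^{t})=\frac{e^{t} f'(e^{t})}{f(e^{t})}=\frac{x f'(x)}{f(x)}, \qquad x=e^{t}.
\]
By the standard first-order criterion, a differentiable function $F$ is convex on the interval $J$ if and only if its derivative $F'$ is nondecreasing on $J$. Combining this with the equivalence from the previous step, $f$ is geometrically convex on $I$ if and only if $F'$ is nondecreasing on $J$.

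Finally I would transport the monotonicity statement back to the original variable. Because $t\mapsto x=e^{t}$ is a strictly increasing bijection from $J$ onto $I$, the function $F'(t)$ is nondecreasing in $t$ exactly when $\frac{x f'(x)}{f(x)}$ is nondecreasing in $x$. Chaining the two equivalences then yields the claim: $f$ is geometrically convex on $I$ if and only if $\frac{x f'(x)}{f(x)}$ is nondecreasing. The only point requiring care is the logical direction inside the definition, namely that it is the weighted inequality (\ref{eqn:ineq-geometric-convexity-2}), rather than merely the two-point version (\ref{eqn:ineq-geometric-convexity-1}), that is equivalent to full convexity of $F$; but for a continuous $f$ this presents no real difficulty.
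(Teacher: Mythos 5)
Your proof is correct. The paper itself offers no proof of this lemma---it defers to \cite{Nilculescu-2000}---and your argument (pass to $F(t)=\ln f(e^{t})$, identify geometric convexity of $f$ with ordinary convexity of $F$ via Jensen's inequality, apply the first-order criterion that a differentiable $F$ is convex iff $F'$ is nondecreasing, and note $F'(t)=\frac{xf'(x)}{f(x)}$ with $x=e^{t}$ increasing) is precisely the standard log-log transfer argument given in that reference, so it matches the intended proof in both substance and structure.
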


\begin{lemma}\label{lem:geometric-convexity-lemma-2}
Let $f:I\subseteq(0,\infty) \rightarrow (0,\infty)$ be a differentiable function. Then $f$ is a geometrically convex function if and only if the function $\frac{f(x)}{f(y)} \geq \left( \frac{x}{y}\right)^{\frac{yf'(y)}{f(y)}}$ holds for any $x,y\in I$.
\end{lemma}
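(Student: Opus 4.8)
The plan is to pass to logarithmic coordinates and reduce the assertion to the classical support-line characterization of ordinary convexity. Specifically, I would set $g(t) = \ln f(e^t)$ on the interval $J = \{\ln x : x \in I\}$. Since $f$ maps into $(0,\infty)$ and is differentiable, $g$ is well defined and differentiable, and the chain rule gives $g'(t) = \frac{e^t f'(e^t)}{f(e^t)}$; writing $x = e^t$, this is precisely $\frac{x f'(x)}{f(x)}$.

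The first key step is the equivalence that $f$ is geometrically convex on $I$ if and only if $g$ is convex on $J$ in the usual sense. Taking logarithms in the defining inequality \eqref{eqn:ineq-geometric-convexity-1} and substituting $x_1 = e^{t_1}$, $x_2 = e^{t_2}$ turns it into the midpoint relation $g\!\left(\frac{t_1+t_2}{2}\right) \leq \frac{1}{2}\bigl(g(t_1)+g(t_2)\bigr)$; since $g$ inherits continuity from $f$, midpoint convexity upgrades to full convexity. Equivalently, and more directly, Lemma~\ref{lem:geometric-convexity-lemma-1} says that geometric convexity of $f$ is exactly the statement that $\frac{xf'(x)}{f(x)} = g'(t)$ is nondecreasing, which for a differentiable $g$ is the same as convexity of $g$.

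The second step invokes the standard fact that for a differentiable function $g$, convexity on $J$ is equivalent to the support-line inequality $g(s) \geq g(t) + g'(t)(s-t)$ holding for all $s,t \in J$. I would then substitute $s = \ln x$ and $t = \ln y$: the left side becomes $\ln f(x)$, the term $g(t)$ becomes $\ln f(y)$, the slope $g'(t)$ becomes $\frac{y f'(y)}{f(y)}$, and $s - t = \ln(x/y)$, so the inequality reads
\[
\ln f(x) \geq \ln f(y) + \frac{y f'(y)}{f(y)} \, \ln\frac{x}{y}.
\]
Exponentiating, since $\exp$ is increasing, yields exactly $\frac{f(x)}{f(y)} \geq \left(\frac{x}{y}\right)^{y f'(y)/f(y)}$, which is the claimed inequality; the reverse direction follows by reading the same chain of equivalences backward.

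The only step requiring a touch of care is the equivalence in the second paragraph, namely that the logarithmic transform sends geometric convexity to ordinary convexity, where one must either appeal to the continuity of $f$ to pass from midpoint to full convexity or route the argument through Lemma~\ref{lem:geometric-convexity-lemma-1} to sidestep that point entirely. I expect this equivalence to be the main conceptual obstacle; everything after it is the textbook support-line inequality in disguise, and the remaining work (the chain-rule computation of $g'$ and the term-by-term translation under the substitution) is routine.
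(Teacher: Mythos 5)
Your proof is correct. Note that the paper itself gives no argument for this lemma: it simply cites Niculescu's article \emph{Convexity according to the geometric mean} for the proofs of both Lemma~\ref{lem:geometric-convexity-lemma-1} and Lemma~\ref{lem:geometric-convexity-lemma-2}. Your route --- the substitution $g(t)=\ln f(e^t)$, the equivalence of geometric convexity of $f$ with ordinary convexity of $g$, and then the support-line characterization $g(s)\geq g(t)+g'(t)(s-t)$ translated back by exponentiation --- is precisely the standard argument underlying that reference, and both directions of the equivalence are handled soundly (the forward one via the support line, the converse by recovering convexity of $g$ from the support-line inequality and invoking Lemma~\ref{lem:geometric-convexity-lemma-1}). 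Your flagged point of care, upgrading midpoint convexity to full convexity via continuity (or bypassing it through Lemma~\ref{lem:geometric-convexity-lemma-1}), is exactly right and is the only subtlety in the proof.
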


\noindent
For proofs of Lemmas \ref{lem:geometric-convexity-lemma-1} and \ref{lem:geometric-convexity-lemma-2}, see \cite{Nilculescu-2000}.\\

\noindent
The purpose of this paper is to establish some related inequalities for the $q$-extension of the Gamma function, by using  geometric convexity and monotonicity features of certain functions associated with  the $q$-extended Gamma function. We present our results in the following section.  \\


\section{Main Results}

\begin{theorem}\label{thm:q-of-Krasniqi-Shabani}
Let $x\ge1$, $y\ge1$ and $q\in(0,1)$. Then the  double inequality
\begin{equation}\label{eqn:q-of-Krasniqi-Shabani} 
\left( \frac{x}{y}\right)^{y(-\frac{\ln q}{1-q}q^{y}+\psi_q(y))}e^{\left( \frac{q^x - q^y}{1-q}\right)}  \leq \frac{\Gamma_q(x)}{\Gamma_q(y)}  \leq  \left( \frac{x}{y}\right)^{x(-\frac{\ln q}{1-q}q^{x}+\psi_q(x))}e^{\left( \frac{q^x - q^y}{1-q}\right)}
\end{equation}
holds true.
\end{theorem}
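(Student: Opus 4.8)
The plan is to realize the middle ratio as $f(x)/f(y)$ for a single auxiliary function $f$ that is geometrically convex on $[1,\infty)$, and then to read off both bounds from Lemma \ref{lem:geometric-convexity-lemma-2}. First I would set
\[
f(x)=\Gamma_q(x)\exp\!\left(-\frac{q^x}{1-q}\right),\qquad x\ge 1,
\]
which is positive, and compute its logarithmic derivative using $\frac{d}{dx}q^x=q^x\ln q$ together with the definition of $\psi_q$:
\[
\frac{f'(x)}{f(x)}=\psi_q(x)-\frac{\ln q}{1-q}\,q^x,\qquad \frac{xf'(x)}{f(x)}=x\!\left(-\frac{\ln q}{1-q}q^x+\psi_q(x)\right)=:A(x).
\]
Notice that $A(x)$ is exactly the exponent appearing in (\ref{eqn:q-of-Krasniqi-Shabani}), and that $\frac{f(x)}{f(y)}=\frac{\Gamma_q(x)}{\Gamma_q(y)}\exp\!\left(-\frac{q^x-q^y}{1-q}\right)$, so the exponential prefactor in the theorem is precisely what converts $f(x)/f(y)$ back into $\Gamma_q(x)/\Gamma_q(y)$.

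By Lemma \ref{lem:geometric-convexity-lemma-1}, $f$ is geometrically convex on $[1,\infty)$ as soon as $A(x)=xg(x)$ is nondecreasing there, where $g(x)=\psi_q(x)-\frac{\ln q}{1-q}q^x$. Writing $\psi_q$ through (\ref{eqn:q-digamma}) and $\psi'_q(x)=(\ln q)^2\sum_{n=1}^{\infty}\frac{nq^{nx}}{1-q^n}$, the term $\frac{\ln q}{1-q}q^x$ cancels exactly the $n=1$ term in each series, leaving
\[
g(x)=-\ln(1-q)+\ln q\sum_{n=2}^{\infty}\frac{q^{nx}}{1-q^n},\qquad g'(x)=(\ln q)^2\sum_{n=2}^{\infty}\frac{nq^{nx}}{1-q^n}>0.
\]
Thus $g$ is strictly increasing, and since $A'(x)=g(x)+xg'(x)$, the monotonicity of $A$ follows once I show $g(x)\ge 0$ on $[1,\infty)$; because $g$ increases, this reduces to the single inequality $g(1)\ge 0$.

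With $A$ nondecreasing, Lemma \ref{lem:geometric-convexity-lemma-2} gives $\frac{f(x)}{f(y)}\ge\left(\frac{x}{y}\right)^{A(y)}$ for all $x,y\ge 1$; interchanging the roles of $x$ and $y$ and taking reciprocals yields the companion bound $\frac{f(x)}{f(y)}\le\left(\frac{x}{y}\right)^{A(x)}$. Multiplying the resulting double inequality by $\exp\!\left(\frac{q^x-q^y}{1-q}\right)$ and invoking the identity for $f(x)/f(y)$ recorded above turns it into exactly (\ref{eqn:q-of-Krasniqi-Shabani}).

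The main obstacle is the residual estimate $g(1)\ge 0$, which is where the hypothesis $x\ge 1$ is genuinely used. Since $g(1)=-\gamma_q-\frac{q\ln q}{1-q}$, this is the claim $\gamma_q\le-\frac{q\ln q}{1-q}$, or equivalently, after the same $n=1$ cancellation, $-\ln(1-q)\ge(-\ln q)\sum_{n=2}^{\infty}\frac{q^n}{1-q^n}$ for every $q\in(0,1)$. I expect to establish this by expanding $-\ln(1-q)=\sum_{k\ge 1}q^k/k$ and $\frac{q^n}{1-q^n}=\sum_{j\ge 1}q^{nj}$ and comparing the resulting double series, or by using elementary bounds on $\ln q$; the delicate point is that the comparison appears to fail term by term and must be carried out on the full sums. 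In the limit $q\to 1$ this estimate degenerates to $\psi(1)+1=1-\gamma>0$, which confirms that $x\ge 1$ is the correct range.
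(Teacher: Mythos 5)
Your construction coincides with the paper's: the authors take $f(x)=e^{[x]_q}\Gamma_q(x)$, which differs from your $\Gamma_q(x)\exp\bigl(-\tfrac{q^x}{1-q}\bigr)$ only by the constant factor $e^{1/(1-q)}$, and they run the identical computation — same logarithmic derivative, same cancellation of the $n=1$ terms in the series for $\psi_q$ and $\psi_q'$, same appeal to Lemmas \ref{lem:geometric-convexity-lemma-1} and \ref{lem:geometric-convexity-lemma-2}, same reconstitution of the exponential prefactor from $[y]_q-[x]_q=\tfrac{q^x-q^y}{1-q}$. All of that part of your write-up is correct. The problem is that you have reduced the entire proof to the single inequality $g(1)\ge 0$, equivalently $\gamma_q\le\tfrac{q(-\ln q)}{1-q}$, and you do not prove it: you only list strategies you ``expect'' to work. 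This is a genuine gap, and not a cosmetic one — it is exactly where the hypothesis $x\ge 1$ enters, and as you yourself observe the estimate cannot be done term by term; indeed both $-\ln(1-q)$ and $(-\ln q)\sum_{n\ge 2}\tfrac{q^n}{1-q^n}$ diverge like $-\ln(1-q)$ as $q\to 1^-$, so the inequality rests on a cancellation of two divergent quantities and genuinely requires an argument.

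The gap can be closed with elementary means, and here is one way. From the functional equation ~(\ref{eqn:functional-eqn-q-Gamma-reals}) one gets $\psi_q(x+1)=\psi_q(x)-\tfrac{q^x\ln q}{1-q^x}$, so $g(1)=\psi_q(1)-\tfrac{q\ln q}{1-q}=\psi_q(2)$ (alternatively, re-index the series: $\sum_{n\ge 2}\tfrac{q^n}{1-q^n}=\sum_{n\ge 1}\tfrac{q^{2n}}{1-q^n}$). Then, using $-\ln q\le\tfrac{1-q}{q}$ and $1-q^n=(1-q)(1+q+\cdots+q^{n-1})\ge n(1-q)q^{n-1}$,
\begin{equation*}
(-\ln q)\sum_{n=1}^{\infty}\frac{q^{2n}}{1-q^{n}}
\;\le\;\sum_{n=1}^{\infty}\frac{q^{2n-1}}{1+q+\cdots+q^{n-1}}
\;\le\;\sum_{n=1}^{\infty}\frac{q^{n}}{n}\;=\;-\ln(1-q),
\end{equation*}
whence $\psi_q(2)\ge 0$ and your argument goes through on $[1,\infty)$. (Note that the extra factor $q^n$ gained by passing to $\psi_q(2)$ is essential; the same bounds applied to the form $-\ln(1-q)\ge(-\ln q)\sum_{n\ge2}\tfrac{q^n}{1-q^n}$ produce a divergent majorant.) It is worth saying that the paper itself elides this point: it simply asserts that $-\ln(1-q)+\sum_{n\ge 2}\bigl[\tfrac{(\ln q)q^{nx}+nx(\ln q)^2q^{nx}}{1-q^n}\bigr]\ge 0$, even though the summands are negative whenever $nx<1/(-\ln q)$. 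Your decomposition $A'(x)=g(x)+xg'(x)$ with $g'>0$ and $g$ increasing is the correct way to organize that claim, but as submitted it is an unfinished proof; finish it with the estimate above or with a citation for a bound on $\gamma_q$.
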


\begin{proof}
Define a function $f$ for $x\ge1$ and $q\in(0,1)$ by $f(x)=e^{[x]_q}\Gamma_q(x)$. Then,
\begin{equation*}
\ln f(x)=[x]_q + \ln\Gamma_q(x) \quad \text{which implies}\quad \frac{f'(x)}{f(x)}=[x]'_q + \psi_q(x) = -\frac{(\ln q)q^{x}}{1-q} + \psi_q(x).
\end{equation*}
That further implies,
\begin{align*}
\left( \frac{xf'(x)}{f(x)}\right)&= -x\frac{(\ln q)q^{x}}{1-q} + x\psi_q(x) \qquad \text{yielding,}\\
\left( \frac{xf'(x)}{f(x)}\right)'&= -\left( \frac{(\ln q)q^{x}}{1-q} + x \frac{(\ln q)^{2}q^{x}}{1-q} \right) +  \psi_q(x) + x\psi'_q(x) \\
&=-\frac{(\ln q)q^{x}}{1-q} - x \frac{(\ln q)^{2}q^{x}}{1-q} - \ln(1-q) + (\ln q) \sum_{n=1}^{\infty}\frac{q^{nx}}{1-q^{n}} \\
& \quad  + x (\ln q)^{2} \sum_{n=1}^{\infty}\frac{nq^{nx}}{1-q^{n}} \\
&= - \ln(1-q) + (\ln q) \sum_{n=2}^{\infty}\frac{q^{nx}}{1-q^{n}} +  x(\ln q)^{2} \sum_{n=2}^{\infty}\frac{nq^{nx}}{1-q^{n}} \\
&= - \ln(1-q) + \sum_{n=2}^{\infty}\left[ \frac{(\ln q)q^{nx} + nx(\ln q)^{2} q^{nx}}{1-q^{n}} \right]\ge0.
\end{align*}
\noindent
Thus $\frac{xf'(x)}{f(x)}$ is nondecreasing. Therefore, by Lemmas \ref{lem:geometric-convexity-lemma-1} and \ref{lem:geometric-convexity-lemma-2}, $f$ is geometrically convex, and consequently \, $\frac{f(x)}{f(y)} \geq \left( \frac{x}{y}\right)^{\frac{yf'(y)}{f(y)}}$ \, resulting to

\begin{equation}\label{eqn:left-ineq}
 \frac{e^{[x]_q}\Gamma_q(x)}{e^{[y]_q}\Gamma_q(y)} \geq \left( \frac{x}{y}\right)^{y([y]'_q + \psi_q(y))}
\end{equation}
and
\begin{equation} \label{eqn:right-ineq}
 \frac{e^{[y]_q}\Gamma_q(y)}{e^{[x]_q}\Gamma_q(x)} \geq \left( \frac{y}{x}\right)^{x([x]'_q + \psi_q(x))}. 
\end{equation}
Combining  ~(\ref{eqn:left-ineq}) and ~(\ref{eqn:right-ineq}) concludes  the proof of Theorem \ref{thm:q-of-Krasniqi-Shabani}. Observe that $[y]_q - [x]_q =\frac{q^x - q^y}{1-q}$.\\
\end{proof}

\begin{corollary}\label{cor:Cor-of-Krasniqi-Shabani}
For $x>0$ and $q\in(0,1)$, the  inequalities
\begin{multline}\label{eqn:Cor-of-Krasniqi-Shabani} 
\left( \frac{x+1}{x+\frac{1}{2}}\right)^{(x+\frac{1}{2})\left(-\frac{\ln q}{1-q}q^{(x+\frac{1}{2})}+\psi_q(x+\frac{1}{2})\right)}e^{q^{x}\left( \frac{q - \sqrt{q}}{1-q}\right)}  \leq \frac{\Gamma_q(x+1)}{\Gamma_q(x+\frac{1}{2})} \\   \leq    \left( \frac{x+1}{x+\frac{1}{2}}\right)^{(x+1)\left(-\frac{\ln q}{1-q}q^{(x+1)}+\psi_q(x+1)\right)}e^{q^{x}\left( \frac{q - \sqrt{q}}{1-q}\right)} 
\end{multline}
are valid.
\end{corollary}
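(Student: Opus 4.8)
The plan is to obtain Corollary~\ref{cor:Cor-of-Krasniqi-Shabani} as a direct specialization of Theorem~\ref{thm:q-of-Krasniqi-Shabani}. Concretely, I would substitute $x \mapsto x+1$ and $y \mapsto x+\tfrac12$ into the double inequality~(\ref{eqn:q-of-Krasniqi-Shabani}). The ratio $\frac{\Gamma_q(x+1)}{\Gamma_q(x+\frac12)}$ then sits in the middle, and the two outer bounds take essentially the forms displayed in~(\ref{eqn:Cor-of-Krasniqi-Shabani}), once the common exponential prefactor is rewritten. No new convexity analysis is needed: the heavy lifting has already been done in Theorem~\ref{thm:q-of-Krasniqi-Shabani}, and the corollary is a bookkeeping consequence.

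The one genuine computation is the exponential factor. After the substitution the common term $e^{(q^{X}-q^{Y})/(1-q)}$ becomes $e^{(q^{x+1}-q^{x+1/2})/(1-q)}$, and factoring $q^x$ out of the numerator gives $q^{x+1}-q^{x+1/2}=q^{x}(q-q^{1/2})=q^{x}(q-\sqrt q)$, so the factor simplifies to $e^{q^{x}\left(\frac{q-\sqrt q}{1-q}\right)}$, exactly the prefactor appearing in~(\ref{eqn:Cor-of-Krasniqi-Shabani}). The base $\frac{x+1}{x+1/2}$ and the two exponents $(x+\tfrac12)\!\left(-\frac{\ln q}{1-q}q^{x+1/2}+\psi_q(x+\tfrac12)\right)$ and $(x+1)\!\left(-\frac{\ln q}{1-q}q^{x+1}+\psi_q(x+1)\right)$ are read off verbatim from the left- and right-hand sides of~(\ref{eqn:q-of-Krasniqi-Shabani}) after substitution, so that part demands no manipulation.

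The step I expect to require the most care is checking the hypotheses of Theorem~\ref{thm:q-of-Krasniqi-Shabani}, which demands both arguments to be at least $1$. With $X=x+1$ this holds for every $x>0$, but $Y=x+\tfrac12\ge 1$ forces $x\ge\tfrac12$. Thus the clean substitution delivers~(\ref{eqn:Cor-of-Krasniqi-Shabani}) immediately on the range $x\ge\tfrac12$, whereas the blanket claim for all $x>0$ would need either a restriction of the stated range to $x\ge\tfrac12$ or a separate verification that the geometric-convexity estimate $\left(\frac{xf'(x)}{f(x)}\right)'\ge 0$ underlying Theorem~\ref{thm:q-of-Krasniqi-Shabani} in fact persists down to $x=\tfrac12$. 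I would settle this by re-examining the sign of the series $-\ln(1-q)+\sum_{n=2}^{\infty}\frac{(\ln q)\,q^{nx}\,(1+nx\ln q)}{1-q^{n}}$ on the slightly larger interval $[\tfrac12,\infty)$; once that monotonicity is confirmed there, Lemmas~\ref{lem:geometric-convexity-lemma-1} and~\ref{lem:geometric-convexity-lemma-2} apply unchanged and the corollary follows by the substitution above, together with the elementary simplification of the exponential term.
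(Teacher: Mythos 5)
Your proposal is essentially the paper's own proof: the paper likewise obtains the corollary by substituting $x \mapsto x+1$ and $y \mapsto x+\tfrac{1}{2}$ into Theorem \ref{thm:q-of-Krasniqi-Shabani}, with the same simplification $q^{x+1}-q^{x+\frac{1}{2}} = q^{x}(q-\sqrt{q})$ of the exponential factor. The hypothesis mismatch you flag is genuine and is silently ignored by the paper: since Theorem \ref{thm:q-of-Krasniqi-Shabani} is stated only for arguments $\geq 1$, the substitution $y = x+\tfrac{1}{2}$ is legitimate only for $x \geq \tfrac{1}{2}$, so the claim for all $x>0$ does not follow from the theorem as stated, and some additional verification (such as the monotonicity of $\frac{xf'(x)}{f(x)}$ on $[\tfrac{1}{2},\infty)$ that you propose) is indeed required --- note this cannot be waved away by extending the theorem to all $x,y>0$, since the expression for $\left( \frac{xf'(x)}{f(x)}\right)'$ in the paper's proof tends to $-\infty$ as $x \to 0^{+}$, so the geometric convexity genuinely fails near the origin. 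In this respect your write-up is more careful than the paper's one-line proof.
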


\begin{proof}
This follows directly from Theorem \ref{thm:q-of-Krasniqi-Shabani} by substituting $x$ by $x+1$, and $y$ by $x+\frac{1}{2}$.
\end{proof}


\begin{theorem}\label{thm:q-extension-ineq}
Let $x>0$, $y>0$ and $\alpha \ge x^*$, where $x^*$ is the unique positive root of $\psi_q(x)$. Then for fixed $q\in(0,1)$,  the  double inequality
\begin{equation}\label{eqn:q-extension-ineq} 
e^{y-x}\frac{(x+\alpha)}{(y+\alpha)}\left( \frac{x}{y}\right)^{y\left(\frac{y+\alpha-1}{y+\alpha} + \psi_q(y+\alpha)\right)}  \leq \frac{\Gamma_q(x+\alpha)}{\Gamma_q(y+\alpha)}   \leq 
e^{y-x}\frac{(x+\alpha)}{(y+\alpha)}\left( \frac{x}{y}\right)^{x\left(\frac{x+\alpha-1}{x+\alpha} + \psi_q(x+\alpha)\right)}
\end{equation}
holds true.
\end{theorem}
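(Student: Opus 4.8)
The plan is to mimic the strategy of Theorem~\ref{thm:q-of-Krasniqi-Shabani}: reverse-engineer an auxiliary function $g$ whose logarithmic derivative reproduces the exponents in~\eqref{eqn:q-extension-ineq}, prove that $g$ is geometrically convex, and then read off the two bounds from Lemma~\ref{lem:geometric-convexity-lemma-2}. Matching the exponent $x\bigl(\frac{x+\alpha-1}{x+\alpha}+\psi_q(x+\alpha)\bigr)$ against $\frac{xg'(x)}{g(x)}$ forces $\frac{g'(x)}{g(x)}=1-\frac{1}{x+\alpha}+\psi_q(x+\alpha)$, and integrating this suggests
\[
g(x)=\frac{e^{x}\,\Gamma_q(x+\alpha)}{x+\alpha}.
\]
First I would confirm the choice by checking that $\ln g(x)=x-\ln(x+\alpha)+\ln\Gamma_q(x+\alpha)$ indeed yields $\frac{g'(x)}{g(x)}=\frac{x+\alpha-1}{x+\alpha}+\psi_q(x+\alpha)$.

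Next, in the spirit of Lemma~\ref{lem:geometric-convexity-lemma-1}, I would set $h(x)=\frac{xg'(x)}{g(x)}=x-\frac{x}{x+\alpha}+x\psi_q(x+\alpha)$ and differentiate to obtain
\[
h'(x)=1-\frac{\alpha}{(x+\alpha)^{2}}+\psi_q(x+\alpha)+x\psi_q'(x+\alpha).
\]
To conclude $h'\ge0$ I would collect three facts: since $\psi_q$ is increasing and $x+\alpha>\alpha\ge x^{*}$, we get $\psi_q(x+\alpha)\ge\psi_q(x^{*})=0$; since $\psi_q$ is increasing, $x\psi_q'(x+\alpha)\ge0$; and $1-\frac{\alpha}{(x+\alpha)^{2}}\ge0$. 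Granting all three, Lemma~\ref{lem:geometric-convexity-lemma-1} shows that $g$ is geometrically convex.

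The main obstacle is the third fact, namely $1-\frac{\alpha}{(x+\alpha)^{2}}\ge0$, equivalently $(x+\alpha)^{2}\ge\alpha$. Since $x>0$ forces $(x+\alpha)^{2}>\alpha^{2}$, this is immediate once $\alpha\ge1$, so the real issue is to secure $\alpha\ge1$. As $\alpha\ge x^{*}$, it suffices to show the positive root $x^{*}$ of $\psi_q$ satisfies $x^{*}\ge1$; because $\psi_q$ is increasing, this is equivalent to $\psi_q(1)\le0$, i.e.\ to $\gamma_q\ge0$. I would establish $\gamma_q\ge0$ for $q\in(0,1)$ from the series~\eqref{eqn:q-digamma}, and I expect this positivity of $\gamma_q$ to be the step demanding the most care; without it the example $\alpha=x^{*}<1$ with $x$ small already makes $h'(x)<0$.

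Finally, with $g$ geometrically convex, I would invoke Lemma~\ref{lem:geometric-convexity-lemma-2} in both directions. Since $\frac{g(x)}{g(y)}=e^{x-y}\frac{y+\alpha}{x+\alpha}\cdot\frac{\Gamma_q(x+\alpha)}{\Gamma_q(y+\alpha)}$, the inequality $\frac{g(x)}{g(y)}\ge\bigl(\frac{x}{y}\bigr)^{yg'(y)/g(y)}$ rearranges to the left-hand bound of~\eqref{eqn:q-extension-ineq}, while interchanging $x$ and $y$ and rearranging gives $\frac{\Gamma_q(x+\alpha)}{\Gamma_q(y+\alpha)}\le e^{y-x}\frac{x+\alpha}{y+\alpha}\bigl(\frac{x}{y}\bigr)^{xg'(x)/g(x)}$, the right-hand bound. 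Substituting the computed value of $\frac{g'}{g}$ into the exponents $\frac{yg'(y)}{g(y)}$ and $\frac{xg'(x)}{g(x)}$ then reproduces exactly the stated inequality.
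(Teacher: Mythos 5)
Your construction coincides with the paper's own proof: the same auxiliary function $g(x)=e^{x}\Gamma_q(x+\alpha)/(x+\alpha)$, the same appeal to Lemmas \ref{lem:geometric-convexity-lemma-1} and \ref{lem:geometric-convexity-lemma-2}, and the same final rearrangement. You are in fact more careful than the paper: the correct derivative is, as you write, $1-\frac{\alpha}{(x+\alpha)^{2}}+\psi_q(x+\alpha)+x\psi_q'(x+\alpha)$, whereas the paper states the last term as $-\frac{1}{(x+\alpha)^{2}}$ and then asserts positivity with no justification whatsoever, never indicating where the hypothesis $\alpha\ge x^{*}$ enters; your three-fact decomposition is precisely the justification the paper omits.

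The one genuine gap in your write-up is the step you deferred: the claim $x^{*}\ge 1$ (equivalently $\gamma_q\ge 0$) is never proved, and, as you yourself observe, without it the whole argument fails for small $x$ when $\alpha=x^{*}<1$. The claim is true, and it does not require delicate series analysis: since $\Gamma_q(1)=1$ and, by \eqref{eqn:functional-eqn-q-Gamma-reals}, $\Gamma_q(2)=[1]_q\Gamma_q(1)=1$, the function $\ln\Gamma_q$ vanishes at $1$ and at $2$, so Rolle's theorem produces a zero of $\psi_q$ in $(1,2)$; by the uniqueness of the positive root of $\psi_q$ (cited in the paper from Alzer--Grinshpan), that zero is $x^{*}$, hence $1<x^{*}<2$. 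Alternatively, the series route you anticipated also works in two lines: using $-\ln q\ge 1-q$ and $\frac{(1-q)q^{n}}{1-q^{n}}=\frac{q^{n}}{1+q+\cdots+q^{n-1}}\ge\frac{q^{n}}{n}$ in \eqref{eqn:q-digamma} gives
\[
(-\ln q)\sum_{n=1}^{\infty}\frac{q^{n}}{1-q^{n}}\;\ge\;\sum_{n=1}^{\infty}\frac{q^{n}}{n}\;=\;-\ln(1-q),
\]
i.e.\ $\psi_q(1)\le 0$, so $\gamma_q\ge 0$ and $x^{*}\ge 1$. With either argument inserted, your proof is complete, and it actually repairs the defects in the paper's own proof.
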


\begin{proof}
Define a function $g$ for $x>0$ and $q\in(0,1)$ by $g(x)=\frac{e^x\Gamma_q(x+\alpha)}{x+\alpha}$. Then,
\begin{equation*}
\ln g(x)=x + \ln\Gamma_q(x+\alpha)-\ln(x+\alpha). \quad \text{This implies}\quad \frac{g'(x)}{g(x)}=1 + \psi_q(x+\alpha) -\frac{1}{x+\alpha}.
\end{equation*}
Then,
\begin{equation*}
\frac{xg'(x)}{g(x)}= x + x\psi_q(x+\alpha) -\frac{x}{x+\alpha},
\end{equation*}
from which we obtain,
\begin{equation*}
\left( \frac{xg'(x)}{g(x)}\right)'= 1 + \psi_q(x+\alpha) + x \psi'_q(x+\alpha) - \frac{1}{(x+\alpha)^2} > 0.
\end{equation*}
\noindent
Thus $\frac{xg'(x)}{g(x)}$ is nondecreasing. Therefore, by Lemmas \ref{lem:geometric-convexity-lemma-1} and \ref{lem:geometric-convexity-lemma-2}, $g$ is geometrically convex, and thus \, $\frac{g(x)}{g(y)} \geq \left( \frac{x}{y}\right)^{\frac{yg'(y)}{g(y)}}$. Consequently, we obtain
\begin{equation*} \label{eqn:geo-ineq-1}
\frac{(y+\alpha)e^x\Gamma_q(x+\alpha)}{(x+\alpha)e^y\Gamma_q(y+\alpha)}   \geq  \left( \frac{x}{y}\right)^{y \left(\frac{y+\alpha-1}{y+\alpha} + \psi_q(y+\alpha) \right)}
\end{equation*}
and
\begin{equation*} \label{eqn:geo-ineq-2}
\frac{(x+\alpha)e^y\Gamma_q(y+\alpha)}{(y+\alpha)e^x\Gamma_q(x+\alpha)}   \geq  \left( \frac{y}{x}\right)^{x \left(\frac{x+\alpha-1}{x+\alpha} + \psi_q(x+\alpha) \right)}
\end{equation*}
concluding  the proof of Theorem \ref{thm:q-extension-ineq}.\\
\end{proof}

\begin{theorem}\label{thm:q-extension-ineq-using-MVT}
Let $x>y>0$. Then for fixed $q\in(0,1)$,  the  double inequality
\begin{equation}\label{eqn:q-extension-ineq-using-MVT} 
e^{(x-y)\psi_q(y)} < \frac{\Gamma_q(x)}{\Gamma_q(y)} < e^{(x-y)\psi_q(x)} 
\end{equation}
holds true.
\end{theorem}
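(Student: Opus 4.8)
The plan is to invoke the Mean Value Theorem directly, as the theorem's label suggests. First I would observe that the function $h(t)=\ln\Gamma_q(t)$ is differentiable on $(0,\infty)$ with $h'(t)=\psi_q(t)$, which is exactly the definition of the $q$-digamma function recorded in the preliminaries. Applying the Mean Value Theorem to $h$ on the closed interval $[y,x]$ (permissible since $x>y>0$ and $h$ is continuous and differentiable there), there exists some $\xi\in(y,x)$ such that
\begin{equation*}
\ln\Gamma_q(x)-\ln\Gamma_q(y)=(x-y)\psi_q(\xi).
\end{equation*}

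Next I would exploit the monotonicity of $\psi_q$. Since $\psi_q$ is increasing on $(0,\infty)$, as recalled from \cite{Shabani-2008}, and $y<\xi<x$, it follows that $\psi_q(y)<\psi_q(\xi)<\psi_q(x)$. Multiplying this chain by the positive quantity $x-y$ and substituting the identity above for the middle term yields
\begin{equation*}
(x-y)\psi_q(y)<\ln\frac{\Gamma_q(x)}{\Gamma_q(y)}<(x-y)\psi_q(x).
\end{equation*}
Exponentiating each side, and using that the exponential function is strictly increasing, produces the claimed double inequality ~(\ref{eqn:q-extension-ineq-using-MVT}).

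The argument is essentially immediate once the two ingredients are in hand, so I do not anticipate any substantial obstacle; the only delicate point is the \emph{strictness} of both inequalities, which requires $\psi_q$ to be \emph{strictly} increasing rather than merely nondecreasing. This can be confirmed from the series representation ~(\ref{eqn:q-digamma}): its derivative is $\psi'_q(x)=(\ln q)^2\sum_{n=1}^{\infty}\frac{nq^{nx}}{1-q^{n}}$, a sum whose every term is strictly positive for $q\in(0,1)$ (since $(\ln q)^2>0$, $nq^{nx}>0$, and $1-q^{n}>0$), whence $\psi_q$ is strictly increasing and the inequalities $\psi_q(y)<\psi_q(\xi)<\psi_q(x)$ are strict.
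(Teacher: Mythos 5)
Your proof is correct and follows essentially the same route as the paper's: apply the Mean Value Theorem to $\ln\Gamma_q(t)$ on $[y,x]$, invoke the monotonicity of $\psi_q$ cited from \cite{Shabani-2008}, and exponentiate. Your additional verification of \emph{strict} monotonicity via the positivity of $\psi'_q(x)=(\ln q)^2\sum_{n=1}^{\infty}\frac{nq^{nx}}{1-q^{n}}$ is a welcome refinement that the paper leaves implicit.
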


\begin{proof}
Define a function $h_q$ for $t>0$ and $q\in(0,1)$ by $h_q(t)=\ln \Gamma_q(t)$. Let $(y,x)$ be fixed. Then, by the well-known mean value theorem, there exists a $c\in(y,x)$ such that
\begin{equation*}
h_q'(c)= \frac{\ln \Gamma_q(x) - \ln \Gamma_q(y)}{x-y},
\end{equation*}
implying,
\begin{equation*}
\psi_q(c)= \frac{1}{x-y}\ln \frac{ \Gamma_q(x)}{ \Gamma_q(y)}.
\end{equation*}
\noindent
Since $\psi_q(t)$ is increasing for $t>0$, then for $c\in(y,x)$ we obtain
\begin{equation*} 
\psi_q(y) < \frac{1}{x-y}\ln \frac{ \Gamma_q(x)}{ \Gamma_q(y)} < \psi_q(x). 
\end{equation*}
That is
\begin{equation*}
(x-y) \psi_q(y) < \ln \frac{ \Gamma_q(x)}{ \Gamma_q(y)} < (x-y) \psi_q(x) .
\end{equation*}
Exponentiating yields the desired results.\\
\end{proof}

\begin{remark} 
The double inequality ~(\ref{eqn:q-extension-ineq-using-MVT}) provides the $q$-extension of \cite[Corollary 1.5]{Zhang-Xu-Situ-2007} and  \cite[Corollary 2]{Qi-2002-MIA}. 
\end{remark}

\begin{corollary}\label{cor:q-extension-ineq-using-MVT-1}
For $x>0$, $\mu > \lambda >0$ and $q\in(0,1)$, the  inequalities
\begin{equation}\label{eqn:q-extension-ineq-using-MVT-1} 
e^{(\mu - \lambda)\psi_q(x + \lambda)}  <  \frac{\Gamma_q(x + \mu)}{\Gamma_q(x + \lambda)}   <  e^{(\mu - \lambda)\psi_q(x + \mu)}  
\end{equation}
hold true.
\end{corollary}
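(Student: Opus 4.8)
The plan is to obtain this as an immediate specialization of Theorem~\ref{thm:q-extension-ineq-using-MVT}. That theorem asserts, for every pair of reals $X>Y>0$ and fixed $q\in(0,1)$, the double inequality $e^{(X-Y)\psi_q(Y)} < \Gamma_q(X)/\Gamma_q(Y) < e^{(X-Y)\psi_q(X)}$. The whole task therefore reduces to choosing $X$ and $Y$ so that this collapses onto the asserted bounds.

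First I would set $X=x+\mu$ and $Y=x+\lambda$. The standing hypotheses $x>0$ and $\mu>\lambda>0$ guarantee $X=x+\mu > x+\lambda=Y > 0$, so the order requirement $X>Y>0$ of Theorem~\ref{thm:q-extension-ineq-using-MVT} is satisfied and the theorem is applicable. Under this substitution one computes $X-Y=\mu-\lambda$, $\psi_q(Y)=\psi_q(x+\lambda)$ and $\psi_q(X)=\psi_q(x+\mu)$, while the ratio on the left becomes $\Gamma_q(x+\mu)/\Gamma_q(x+\lambda)$. Feeding these values into the double inequality of Theorem~\ref{thm:q-extension-ineq-using-MVT} reproduces inequality~(\ref{eqn:q-extension-ineq-using-MVT-1}) verbatim, which settles the corollary.

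Since the entire derivation is a single substitution, there is no genuine obstacle here; the only point demanding any care is the verification of the ordering $x+\mu > x+\lambda > 0$, which is exactly what ensures that Theorem~\ref{thm:q-extension-ineq-using-MVT} may be invoked. If a self-contained argument were preferred, one could instead reprove the statement directly by applying the mean value theorem to $\ln\Gamma_q$ on the interval $(x+\lambda,\,x+\mu)$ and then using that $\psi_q$ is increasing for positive argument, precisely mirroring the proof of Theorem~\ref{thm:q-extension-ineq-using-MVT}; but appealing to that theorem is the cleaner route and I would present the proof in the substitution form above.
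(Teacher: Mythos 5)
Your proposal is correct and matches the paper's own proof exactly: the paper also obtains the corollary by substituting $x\mapsto x+\mu$ and $y\mapsto x+\lambda$ in Theorem~\ref{thm:q-extension-ineq-using-MVT}. Your added verification that $x+\mu > x+\lambda > 0$ (so the theorem's hypotheses hold) is a worthwhile detail the paper leaves implicit.
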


\begin{proof}
This follows directly from Theorem \ref{thm:q-extension-ineq-using-MVT} by substituting $x$ by $x+\mu$, and $y$ by $x+\lambda$.
\end{proof}

\begin{remark} 
If we set $\mu=1$ in Corollary \ref{cor:q-extension-ineq-using-MVT-1}, then we obtain the $q$-extension of the result of Laforgia and Natalini \cite[Theorem 3.1]{Laforgia-Natalini-2013-ADSA}.
\end{remark}

\begin{corollary}\label{cor:q-extension-ineq-using-MVT-2}
For $x>0$ and $q\in(0,1)$, the  inequalities
\begin{equation}\label{eqn:q-extension-ineq-using-MVT-2} 
e^{\frac{1}{2} \psi_q(x + \frac{1}{2})}  <  \frac{\Gamma_q(x + 1)}{\Gamma_q(x + \frac{1}{2})}   <  e^{\frac{1}{2} \psi_q(x + 1)}  
\end{equation}
hols true.
\end{corollary}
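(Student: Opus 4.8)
The plan is to obtain this result as an immediate specialization of Corollary~\ref{cor:q-extension-ineq-using-MVT-1}, exactly in the spirit of how Corollary~\ref{cor:q-extension-ineq-using-MVT-1} itself was deduced from Theorem~\ref{thm:q-extension-ineq-using-MVT}. First I would verify that the parameter choice $\mu=1$ and $\lambda=\tfrac{1}{2}$ satisfies the hypotheses of Corollary~\ref{cor:q-extension-ineq-using-MVT-1}, namely $\mu>\lambda>0$; here $1>\tfrac{1}{2}>0$, so the substitution is admissible for all $x>0$.

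Next I would carry out the substitution into the double inequality~(\ref{eqn:q-extension-ineq-using-MVT-1}). The exponent factor becomes $\mu-\lambda=1-\tfrac{1}{2}=\tfrac{1}{2}$, while the arguments of the $q$-digamma function become $x+\lambda=x+\tfrac{1}{2}$ in the left bound and $x+\mu=x+1$ in the right bound. Simultaneously the ratio on the central term becomes $\frac{\Gamma_q(x+\mu)}{\Gamma_q(x+\lambda)}=\frac{\Gamma_q(x+1)}{\Gamma_q(x+\tfrac{1}{2})}$. Assembling these pieces reproduces the claimed chain
\begin{equation*}
e^{\frac{1}{2}\psi_q(x+\frac{1}{2})} < \frac{\Gamma_q(x+1)}{\Gamma_q(x+\frac{1}{2})} < e^{\frac{1}{2}\psi_q(x+1)},
\end{equation*}
valid for every $x>0$ and fixed $q\in(0,1)$.

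Since the entire argument reduces to a valid parameter substitution into an already-established inequality, there is no genuine obstacle to overcome; the only point requiring a moment's attention is confirming that the admissibility condition $\mu>\lambda>0$ is met, which it is. Consequently the proof is a single line invoking Corollary~\ref{cor:q-extension-ineq-using-MVT-1} with $\mu=1$, $\lambda=\tfrac{1}{2}$.
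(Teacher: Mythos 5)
Your proof is correct and essentially matches the paper's: the paper obtains the result by substituting $x \mapsto x+1$ and $y \mapsto x+\frac{1}{2}$ directly into Theorem~\ref{thm:q-extension-ineq-using-MVT}, and your route through Corollary~\ref{cor:q-extension-ineq-using-MVT-1} with $\mu=1$, $\lambda=\frac{1}{2}$ amounts to the identical substitution, since that corollary is itself just this specialization of the theorem.
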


\begin{proof}
Follows from Theorem \ref{thm:q-extension-ineq-using-MVT} by substituting $x$ by $x+1$, and $y$ by $x+\frac{1}{2}$.
\end{proof}

\begin{remark} 
By virtue of relation  ~(\ref{eqn:functional-eqn-q-Gamma-reals}), inequalities ~(\ref{eqn:q-extension-ineq-using-MVT-2}) can be rearranged as
\begin{equation}
\left( \frac{1-q}{1-q^x}\right) e^{\frac{1}{2} \psi_q(x + \frac{1}{2})}  <  \frac{\Gamma_q(x)}{\Gamma_q(x + \frac{1}{2})}   <  \left( \frac{1-q}{1-q^x}\right) e^{\frac{1}{2} \psi_q(x + 1)}.  
\end{equation}
\end{remark}

\begin{remark} 
Results similar to inequalities  ~(\ref{eqn:Cor-of-Krasniqi-Shabani}) and  ~(\ref{eqn:q-extension-ineq-using-MVT-2})  can also be found in  \cite{Gao-2011},  \cite{Gao-2014-MIA} and \cite{Nant-Prem-2014-Issues}.
\end{remark}

\section{Conclusion}

In the paper, the authors have established some inequalities for the $q$-extension of the classical Gamma function. The results provide generalizations for several previous results. The findings of this research could provide useful information for researchers interested in $q$-analysis in particular, and the theory of inequalities in general. In addition, a further research could  be conducted to see if similar results could be obtained for other special functions like the $q$-Beta and $q$-Psi functions. This could further expand the potential applications of our results.

\section*{Competing Interests}
The authors declare that there is no competing interests with regard to the publication of this manuscript.


\section*{Acknowledgements}
The authors are grateful to the anonymous reviewers  for their useful comments and suggestions, which helped in improving the quality of this paper.

\bibliographystyle{plain}


\end{document}